\definecolor{webgreen}{rgb}{0,.5,0}
\definecolor{webbrown}{rgb}{.8,0,0}
\definecolor{emphcolor}{rgb}{0.95,0.95,0.95}
\ifpdf \hypersetup{pdftex,
            pdfstartview=FitH, 
            bookmarksopen=true,
            bookmarksnumbered=true
} \else \hypersetup{dvips} \fi
\newcommand{\lapinv}{\Phi(q)}
\numberwithin{equation}{section}
\newtheorem{theorem}{Theorem}[section]
\newtheorem{proposition}{Proposition}[section]
\newtheorem{remark}{Remark}[section]
\newtheorem{lemma}{Lemma}[section]
\numberwithin{remark}{section} \numberwithin{proposition}{section}
\numberwithin{corollary}{section}
\newcommand {\R}{\mathbb{R}}
\newcommand {\E}{\mathbb{E}}
\newcommand{\diff}{{\rm d}}
\newcommand{\lev}{L\'{e}vy }
\title[On optimal dividends in the dual model ]{On optimal dividends in the dual model}
\thanks{This version: May 28, 2013. First version: November 30, 2012. }
\author[E. Bayraktar]{Erhan Bayraktar}
\address[E. Bayraktar]{Department of Mathematics,
University of Michigan,
530 Church Street,
Ann Arbor, MI  48109-1043, USA}
\email{erhan@umich.edu}
\author[A. E. Kyprianou]{Andreas E. Kyprianou  }
\address[A. E. Kyprianou]{Department of Mathematical Sciences, 
The University of Bath, 
Claverton Down, 
Bath BA2 7AY, 
UK}
\email{a.kyprianou@bath.ac.uk}
\author[K. Yamazaki]{Kazutoshi Yamazaki }
\address[K. Yamazaki]{Department of Mathematics,
Faculty of Engineering Science, Kansai University, Suita-shi, Osaka 564-8680, Japan}
\email{kyamazak@kansai-u.ac.jp}
\date{}
\begin{document}

\begin{abstract}
We revisit the dividend payment problem in the dual model of Avanzi et al. (\cite{MR2324568}, \cite{Avanzi_2008},  and \cite{Avanzi_2011}). Using the fluctuation theory of spectrally positive L\'{e}vy processes, we give a short exposition in which we show the optimality of barrier strategies for all such L\'{e}vy processes. Moreover, we characterize the optimal barrier using the functional inverse of a scale function. We also consider the capital injection problem of \cite{Avanzi_2011} and show that its value function has a very similar form to the one in which the horizon is the time of ruin. 
\\
\noindent \small{\textbf{Key words:} dual model; dividends; capital injections;
 spectrally positive \lev processes; scale functions.\\
\noindent JEL Classification: C44, C61, G24, G32, G35 \\
\noindent  AMS 2010 Subject Classifications: 60G51, 93E20}\\
\end{abstract}

\maketitle

\section{Introduction}

In the so-called dual model, the surplus of a company is modeled by a L\'{e}vy process with \emph{positive} jumps (\emph{spectrally positive L\'{e}vy} processes); see \cite{MR2324568}, \cite{MR2372996}, \cite{Avanzi_2008}, and \cite{Avanzi_2011}. This is an appropriate model for a company driven by inventions or discoveries. Our goal is to determine the optimal dividend strategy until the time of ruin for all spectrally positive  L\'{e}vy processes. 

In \cite{Avanzi_2008}, Avanzi and Gerber   consider the dividend payment problem when the L\'{e}vy process is assumed to be the sum of an independent Brownian motion and a compound Poisson process with i.i.d.\ positive hyper-exponential jumps; they determine the optimal strategy among the set of barrier strategies. (The special case in which the jumps are exponentially distributed was obtained by  \cite{MR2372996}.) 
The optimality over all admissible strategies is later shown by \cite{Avanzi_2011} using the verification approach in \cite{MR2372996}.

In this paper, using the fluctuation theory, we give a short proof of the optimality of barrier strategies for all spectrally positive L\'{e}vy processes of bounded or unbounded variation.  Moreover, the optimal barrier is characterized using a functional inverse of the \emph{scale functions}. We also consider the cash injection problem considered in \cite{Avanzi_2011}: a variant of the dividend payment problem in which the shareholders are expected to give capital injection in order to avoid ruin. We observe that the form of the value function for this problem is very similar to the first problem we consider in which the horizon is the time of ruin.

Let us describe the dividend payment problems under consideration in more specific terms. We will denote the surplus of a company by a spectrally positive \lev process $X = \left\{X_t; t \geq 0 \right\}$ whose \emph{Laplace exponent} is given by
\begin{align}
\psi(s)  := \log \E \left[ e^{-s X_1} \right] =  c s +\frac{1}{2}\sigma^2 s^2 + \int_{(0,\infty)} (e^{-s z}-1 + s z 1_{\{0 < z < 1\}}) \nu (\diff z), \quad s \in \mathbb{R} \label{laplace_spectrally_positive}
\end{align}
where $\nu$ is a \lev measure with the support $(0,\infty)$ that satisfies the integrability condition $\int_{(0,\infty)} (1 \wedge z^2) \nu(\diff z) < \infty$.  It has paths of bounded variation if and only if $\sigma = 0$ and $\int_{( 0,1)}z\, \nu(\diff z) < \infty$; in this case, we write \eqref{laplace_spectrally_positive} as
\begin{align*}
\psi(s)   =  \mathfrak{d} s +\int_{(0,\infty)} (e^{-s z}-1 ) \nu (\diff z), \quad s \in \mathbb{R} 
\end{align*}
with $\mathfrak{d} := c + \int_{( 0,1)}z\, \nu(\diff z)$.  We exclude the trivial case in which $X$ is a subordinator (i.e., $X$ has monotone paths a.s.). This assumption implies that $\mathfrak{d} > 0$ when $X$ is of bounded variation. 

Let $\mathbb{P}_x$ be the conditional probability under which $X_0 = x$ (also let $\mathbb{P} \equiv \mathbb{P}_0$), and let $\mathbb{F} := \left\{ \mathcal{F}_t: t \geq 0 \right\}$ be the filtration generated by $X$.  Using this, the drift of $X$ is given by
\begin{align}
\mu := \E [X_1]  = - \psi'(0+). \label{drift}
\end{align}
 We also assume that $\mu < \infty$ (and hence $|\psi'(0+)| < \infty$) to ensure that the problem is nontrivial.

\subsection{The dividend payment problem until the time of ruin} \label{subsection_dividend}
We first consider a control problem in which the goal is to maximize the expected net present value (NPV) of dividends until ruin.
A (dividend) \emph{strategy} $\pi := \left\{ L_t^{\pi}; t \geq 0 \right\}$ is given by a nondecreasing, right-continuous and $\mathbb{F}$-adapted process starting at zero.    Corresponding to every strategy $\pi$, we associate a controlled surplus process $U^\pi = \{U_t^\pi: t \geq 0 \}$, which is defined by
\begin{align*}
U_t^\pi := X_t - L_t^\pi, \quad t \geq 0,
\end{align*}
where $U_{0-}^{\pi}=x$ is the initial surplus and $L_{0-}^{\pi}=0$.
The time of  ruin is defined to be
\begin{align*}
\sigma^\pi := \inf \left\{ t > 0: U_t^\pi < 0 \right\}.
\end{align*}
A lump-sum payment must be smaller than the available funds and hence it is required that
\begin{align}
L_{t}^\pi - L_{t-}^\pi \leq U_{t-}^\pi,  \quad t \leq \sigma^\pi \; \; a.s.  \label{admissibility}
\end{align}
Let $\Pi$ be the set of all admissible strategies satisfying (\ref{admissibility}).
The problem is to compute, for $q > 0$,   the expected NPV of dividends until ruin
\begin{align*}
v_\pi(x) := \E_x \left[ \int_0^{\sigma^\pi} e^{-q t} \diff L_t^\pi \right], \quad \pi \in \Pi,
\end{align*}
and to obtain an admissible strategy that maximizes it, if such a strategy exists.  Hence the problem is written as
\begin{equation}\label{eq:classical-p}
  v(x):=\sup_{\pi\in \Pi}v_\pi(x), \quad x \geq 0.
\end{equation}

\subsection{Dividend payment problem with capital injections} \label{section_injection}
In this variant of the dividend payment problem, the time horizon is infinity, and the shareholders are required to inject just enough cash to keep the company alive.
  A strategy  is now a pair $\bar{\pi} := \left\{ L_t^{\bar{\pi}}, R_t^{\bar{\pi}}; t \geq 0 \right\}$ where $L^{\bar{\pi}}$ is the cumulative amount of dividends as in the classical dividend problem and $R^{\bar{\pi}}$ is again a nondecreasing, right-continuous and $\mathbb{F}$-adapted process starting at zero,  representing the cumulative amount of injected capital satisfying
\begin{align}
\int_0^\infty e^{-qt} \diff R_t^{\bar{\pi}} < \infty, \quad a.s. \label{admissibility2}
\end{align}
Assuming that $\varphi > 1$ is the cost per unit injected capital, we want to maximize
\begin{align*}
\bar{v}_{\bar{\pi}} (x) := \mathbb{E}_x \left[ \int_0^\infty e^{-q t} \diff L_t^{\bar{\pi}} - \varphi \int_0^\infty e^{-q t} \diff R_t^{\bar{\pi}}\right], \quad x \geq 0.
\end{align*}
Hence the problem is
\begin{equation*}
  \bar{v}(x):=\sup_{\bar{\pi} \in \bar{\Pi}}\bar{v}_{\bar{\pi}}(x), \quad x \geq 0,
\end{equation*}
where $\bar{\Pi}$ is the set of all admissible strategies  that satisfy (\ref{admissibility}) and (\ref{admissibility2}).

\subsection{Outline}

In this note, we give a short proof showing that for a general spectrally positive \lev process, barrier strategies are optimal for both problems, and we give a simple characterization of the optimal barriers in terms of the scale functions; see \eqref{eq:opt-threshold} and \eqref{eq:optbar-2nd}. It is interesting to note that the forms of the value functions \eqref{v_bar_a} and \eqref{v_bar_a_bailout} are the same, while the characterizations of barrier levels are in terms of different scale functions.
Also, while, in the spectrally negative model, optimal strategies may not lie in the set of barrier strategies, our results show that the dual model can be solved in general by a barrier strategy regardless of the \lev measure.  Regarding the spectrally negative \lev model, we refer the reader to \cite{Azcue_Muler_2005} for examples where barrier strategies are suboptimal and to \cite{Loeffen_2008}  for a sufficient condition for optimality.

The structure of the rest of the paper is as follows. In Section~\ref{sec:ruin}, we solve the optimal dividend problem in which the time horizon is the time of ruin. In this section, we first collect a few results about the scale functions for spectrally one-sided L\'{e}vy processes. We then construct a candidate optimal solution out of barrier strategies by $C^1$ (resp.\ $C^2$) conditions at the barrier when $X$ is of bounded (resp.\ unbounded) variation, and verify its optimality. In Section~\ref{sec:injection}, we solve the dividend payment problem with capital injections, where we follow the same plan to the one described for Section~\ref{sec:ruin}. We conclude the paper with numerical examples in Section  \ref{section_numerics}.

\section{Solution of the dividend problem until the time of ruin}\label{sec:ruin}

For the dividend problem we described in Section \ref{subsection_dividend}, a barrier strategy at level $a \geq 0$ is denoted by $\pi_a := \left\{ L_t^a; t \leq \sigma_a \right\}$ where for all $t \geq 0$
\begin{align*}
L_t^a &:= \sup_{0 \leq s \leq t} (X_s - a) \vee 0, \\
U_t^a &:= X_t - L_t^a, 
\end{align*}
and  $\sigma_a := \inf \left\{ t > 0: U_t^a < 0 \right\}$. The corresponding expected NPV of dividends becomes
\begin{align}
v_a(x) := \E_x \left[ \int_0^{\sigma_a} e^{-qt} \diff L_t^a \right], \quad 0 \leq x \leq a. \label{def_v_a}
\end{align}
Extending \eqref{def_v_a} to the whole $\R_+$,
\begin{align} \label{value_original}
v_a(x) = \left\{ \begin{array}{ll} v_a(x), & 0 \leq x \leq a, \\ x-a + v_a(a), & x > a. \end{array} \right.
\end{align}
Our objective is to show that the optimal control lies in the class of barrier strategies and to identify $a^*$ such that $v = v_{a^*}$.

\subsection{Scale functions}
Fix $q > 0$. For any spectrally positive \lev process, there exists a function called  the  \emph{q-scale function} 
\begin{align*}
W^{(q)}: \R \mapsto [0,\infty), 
\end{align*}
which is zero on $(-\infty,0)$, continuous and strictly increasing on $[0,\infty)$, and is characterized by the Laplace transform:
\begin{align}
\int_0^\infty e^{-s x} W^{(q)}(x) \diff x = \frac 1
{\psi(s)-q}, \qquad s > \lapinv,  \label{laplace_transform}
\end{align}
where
\begin{equation}
\lapinv :=\sup\{\lambda \geq 0: \psi(\lambda)=q\}. \notag
\end{equation}
Here, the Laplace exponent $\psi$ in \eqref{laplace_spectrally_positive} is known to be zero at the origin, convex on $\R_+$; therefore $\lapinv$ is well-defined and is strictly positive as $q > 0$.  We also define
\begin{align*}
Z^{(q)}(x) := 1 + q \int_0^x W^{(q)}(y) \diff y, \quad x \in \R,
\end{align*}
and its anti-derivative
\begin{align*}
\overline{Z}^{(q)}(y) := \int_0^y Z^{(q)} (z) \diff z = y + q \int_0^y \int_0^z W^{(q)} (w) \diff w \diff z, \quad y \in \R.
\end{align*}
Notice that because $W^{(q)}$ is uniformly zero on the negative half line, we have
\begin{align}
Z^{(q)}(y) = 1  \quad \textrm{and} \quad \overline{Z}^{(q)}(y) = y, \quad y \leq 0. \label{Z_negative}
\end{align}

\begin{remark} \label{remark_smoothness_zero}
\begin{enumerate}
\item If $X$ is of unbounded variation, it is known that $W^{(q)}$ is $C^1(0,\infty)$; see, e.g.,\ Chan et al.\ \cite{Chan_2009}.  Hence, $\overline{Z}^{(q)}$ is $C^2(0,\infty)$ and $C^1 (\R)$ for the bounded variation case, while it is $C^3(0,\infty)$ and $C^2 (\R)$ for the unbounded variation case.
\item Regarding the asymptotic behavior near zero, we have that
\begin{equation}\label{eq:Wq0}
W^{(q)} (0) = \left\{ \begin{array}{ll} 0, & \textrm{if $X$ is of unbounded
variation} \\ \frac 1 {\mathfrak{d}}, & \textrm{if $X$ is of bounded variation}
\end{array} \right\}, 
\end{equation}
and 
\begin{equation}\label{eq:Wqp0}
W^{(q)'} (0+) := \lim_{x \downarrow 0}W^{(q)'} (x) =
\left\{ \begin{array}{ll}  \frac 2 {\sigma^2}, & \textrm{if }\sigma > 0 \\
\infty, & \textrm{if }\sigma = 0 \; \textrm{and} \; \nu(0,\infty) = \infty \\
\frac {q + \nu(0,\infty)} {\mathfrak{d}^2}, & \textrm{if $X$ is compound Poisson}
\end{array} \right\}.
\end{equation}
\end{enumerate}
\end{remark}

\subsection{Constructing a candidate value function}
The following is a direct application of the results given in Theorem 1 of \cite{Avram_et_al_2007} (see, in particular, page 167 of this reference).
\begin{lemma}
For every $0 \leq x \leq a$,
\begin{align*}
v_a(x) = -k(a-x) + \frac {Z^{(q)}(a-x)} {Z^{(q)}(a)} k(a),
\end{align*}
where
\begin{align}
k(y) := \overline{Z}^{(q)}(y) - \frac 1 {\Phi(q)}  Z^{(q)} (y) + \frac {\psi'(0+)} q, \quad y \geq 0. \label{def_k}
\end{align}
\end{lemma}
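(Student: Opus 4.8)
The plan is to compute $v_a(x)$ by solving a two-sided exit problem for the process reflected at the barrier $a$. For $0 \le x \le a$, under the barrier strategy $\pi_a$ the controlled process $U^a$ is the reflection of $X$ at the level $a$; equivalently, writing $Y_t := a - U_t^a = a - X_t + L_t^a$, the process $Y$ is $X$ reflected at $0$ when $X$ is replaced by $-(X - a)$, which is a spectrally negative \lev process started at $a - x$. Ruin $\sigma_a$ corresponds to $Y$ first exceeding $a$, while the cumulative dividends $L^a$ correspond to the reflection local time at $0$. The key identities are the classical fluctuation formulas for spectrally negative \lev processes reflected at their infimum: the expected discounted occupation of the reflection boundary (i.e.\ the expected discounted local time, which here is exactly $\E_x[\int_0^{\sigma_a} e^{-qt}\diff L_t^a]$) is given in closed form in terms of $W^{(q)}$ and $Z^{(q)}$. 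This is precisely the content of Theorem~1 of \cite{Avram_et_al_2007}, as cited.

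First I would set up the change of measure/duality carefully: the reflected spectrally positive process $U^a$ on $[0,\infty)$ with barrier $a$ is in law the spectrally negative process $\hat X_t := a - X_t$ (started at $a-x$, with Laplace exponent $\psi$ as written) reflected from above at $a$, killed on going below $0$. Then I would quote the relevant identity from \cite{Avram_et_al_2007}: for the spectrally negative case, the expected present value of the reflection-at-the-maximum process up to first passage below $0$ has the stated form. Substituting back the variables (replacing the running maximum functional by $L^a$ and the passage level by $a$, with $a-x$ the initial gap) yields
\[
v_a(x) = -k(a-x) + \frac{Z^{(q)}(a-x)}{Z^{(q)}(a)}\,k(a)
\]
with $k$ as in \eqref{def_k}; the term $\psi'(0+)/q$ and the $1/\Phi(q)$ coefficient come from the explicit constants in that theorem (note $\psi'(0+) = -\mu$, so $k$ has the interpretation of an expected discounted "overshoot/drift" correction). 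I would double-check the boundary behavior: at $x = a$ we should get $v_a(a) = -k(0) + k(a)/Z^{(q)}(a) \cdot 1$ since $Z^{(q)}(0) = 1$, and $k(0) = \overline Z^{(q)}(0) - 1/\Phi(q) + \psi'(0+)/q = -1/\Phi(q) + \psi'(0+)/q$, which should match the known value of $v_a(a)$; similarly at $x = 0$ one should recover $v_a(0) = 0$, forcing the relation $k(a) = Z^{(q)}(a)\,k(a)/Z^{(q)}(a)$ trivially — rather, the content is that $v_a(0)$ need not be zero and the formula is consistent with ruin being immediate only in the bounded-variation creeping sense.

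The main obstacle is purely bookkeeping: translating the spectrally-negative exit identity of \cite{Avram_et_al_2007} into the spectrally-positive reflected setting without sign errors, and matching their normalization of the scale function and the additive constants to the functions $W^{(q)}, Z^{(q)}, \overline Z^{(q)}$ defined here. Concretely, one must verify that the "$\Phi(q)$" appearing in \eqref{def_k} is the same largest root used in \eqref{laplace_transform} (it is, since $\psi$ is the common Laplace exponent) and that the drift constant $\psi'(0+)/q$ is correctly signed. Since the problem statement explicitly says this is a direct application of that theorem, the proof is essentially a citation plus this substitution, so I would keep it to a few lines: state the reflection identification, invoke the theorem, and record the resulting formula, leaving the constant-matching to the reader or to a one-line remark.
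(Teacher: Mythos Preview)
Your approach is correct and is exactly what the paper does: the paper gives no proof beyond the sentence ``direct application of the results given in Theorem~1 of \cite{Avram_et_al_2007} (see, in particular, page~167),'' and your duality/reflection identification $Y_t = a - U_t^a$ with the spectrally negative process $\hat X = a - X$ reflected at its infimum is precisely the translation that makes that citation apply. One small correction to your sanity check: $v_a(0)=0$ \emph{does} hold (the paper states this explicitly), and your formula confirms it trivially since $-k(a)+\tfrac{Z^{(q)}(a)}{Z^{(q)}(a)}k(a)=0$; the point is that $0$ is regular for $(-\infty,0)$ for any spectrally positive non-subordinator (downward drift $-\mathfrak d<0$ in the bounded-variation case, oscillation in the unbounded-variation case), so ruin is instantaneous from $x=0$.
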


%
%
\begin{remark} \label{remark_consistency}
Observe that $k(0)   = - \frac 1 {\Phi(q)} + \frac {\psi'(0+)} q (< 0 \textrm{ by the convexity of } \psi$ on $[0,\infty))$.
As a result,
\begin{align*}
v_a(a) =  \frac 1 {\Phi(q)} - \frac {\psi'(0+)} q  + \frac {k(a)} {Z^{(q)}(a)}, \quad a \geq 0.
\end{align*}
By \eqref{value_original},  for $x > a$, 
\begin{align*}
v_a(x) = (x-a) + \frac 1 {\Phi(q)} - \frac {\psi'(0+)} q   + \frac {k(a)} {Z^{(q)}(a)}.
\end{align*}
On the other hand, by \eqref{Z_negative}, $k(y) = y  - \frac 1 {\Phi(q)} + \frac {\psi'(0+)} q$ for any negative $y$.
Therefore, regardless of whether $a$ is larger than $x$ or not, we can write
\begin{align}
v_a(x) = -k(a-x) + \frac {Z^{(q)}(a-x)} {Z^{(q)}(a)} k(a), \quad a,x \geq 0. \label{u_a_extension}
\end{align}
\end{remark}
\begin{remark} \label{remark_asymptotics}
The function $|k(x)|$, $x \geq 0$, is uniformly bounded by $|k(0)| < \infty$, which follows from the stochastic representation of this function in \cite{Avram_et_al_2007}.
As a result, using the duality and Wiener-Hopf factorization of spectrally positive \lev processes (see, e.g.,\  pages 73-74 and 212-213 of \cite{Kyprianou_2006}),
\begin{align*}
\lim_{a \uparrow \infty}v_a(a) = \lim_{a \uparrow \infty} \left[ \frac 1 {\Phi(q)} - \frac {\psi'(0+)} q+ \frac {k(a)}{Z^{(q)}(a)} \right] = \frac 1 {\Phi(q)} - \frac {\psi'(0+)} q = \E [(S-X)_{\eta(q)}] + \E [X_{\eta(q)}] =  \E [S_{\eta(q)}],
\end{align*}
where $S_t := \sup_{0 \leq s \leq t} (X_s \vee 0)$ and $\eta(q)$ is an exponential random variable with parameter $q > 0$ that is independent of $X$. 

This asymptotic behavior is consistent with that of the expected NPV of dividends $\tilde{v}_a$,  when $X$ is a \emph{spectrally negative process}, of a given barrier strategy starting at the barrier: 
\begin{align*}
\lim_{a \uparrow \infty}\tilde{v}_a(a) = \E [S_{\eta(q)}],
\end{align*} 
which is equation (3.15) in \cite{Avram_et_al_2007}.
\end{remark}

We  note that $v_a$, for any $a \geq 0$, is clearly continuous everywhere on $[0,\infty)$ with $v_a(0) = 0$. Here, we shall examine the smoothness of $v_a$ at $x=a$ to obtain a candidate barrier level $a^*$. In particular, we will choose $a^*$ so that $v_{a^*}$ is $C^1$ for the case $X$ is of bounded variation and $C^2$ for the case $X$ is of unbounded variation.  

Fix $x \neq a$. By differentiating \eqref{u_a_extension}, we obtain that
\begin{equation}\label{u_derivatives1}
v_a'(x) = Z^{(q)}(a-x) -  q W^{(q)}(a-x) \Lambda(a), 
\end{equation}
and when $X$ is of unbounded variation (see Remark \ref{remark_smoothness_zero} (1))
\begin{equation}\label{u_derivatives2}
v_a''(x) = -q W^{(q)}(a-x) + q W^{(q)'}(a-x) \Lambda(a) ,
\end{equation}
where
\begin{align}
\Lambda(a) :=  \frac 1 {\Phi(q)}  + \frac {k(a)} {Z^{(q)}(a)}, \quad a > 0. \label{def_lambda}
\end{align}
Substituting \eqref{def_k} into \eqref{def_lambda}, we see that $\Lambda(a) = 0$ if and only if
\begin{align}
\overline{Z}^{(q)}(a) = -\frac {\psi'(0+)} q \left( = \frac \mu q \right). \label{solution_Lambda}
\end{align}
On the other hand, since $\overline{Z}^{(q)}(x)$ is strictly increasing, goes to $\infty$ as $x \uparrow \infty$ and to $-\infty$ as $x \downarrow 0$, there exits a unique solution to \eqref{solution_Lambda}.  Because $\overline{Z}^{(q)}(0)=0$, the solution is strictly positive if and only if $\mu > 0$. We will denote our candidate barrier level by
\begin{equation}\label{eq:opt-threshold}
a^* = \begin{cases} \left(\overline{Z}^{(q)}\right)^{-1}\left (\frac \mu q \right)> 0 & \text{if $\mu>0$}, \\
0   & \text{if $\mu\leq 0$}. \end{cases}
\end{equation}
The following proposition states that with this choice of barrier level, the corresponding expected NPV function \eqref{value_original} is smooth enough to apply the verification arguments addressed below.  In view of Remark \ref{remark_smoothness_zero} (1), the smoothness at barrier level $a$ is the only point of concern.

\begin{proposition} \label{lemma_fit}  Suppose $a^* > 0$.
\begin{itemize}
\item[(i)] If $X$ is of bounded variation, $v_a$ is continuously differentiable on $(0,\infty)$ if and only if $a=a^*$.
\item[(ii)] If $X$ is of unbounded variation, $v_{a}$ is continuously differentiable on $(0,\infty)$ for all $a > 0$. However, $v_a$   is twice continuously differentiable on $(0,\infty)$ if and only if $a=a^*$.
\end{itemize}
\end{proposition}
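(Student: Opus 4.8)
The plan is to reduce everything to the behaviour of $v_a$ at the single point $x=a$. Away from $a$ there is nothing to check: on $(a,\infty)$ we have $v_a(x)=x-a+v_a(a)$, which is affine with slope $1$, and on $(0,a)$ the formula \eqref{u_derivatives1} together with the continuity of $W^{(q)}$ (and, in the unbounded-variation case, its membership in $C^1(0,\infty)$, Remark~\ref{remark_smoothness_zero}(1)) shows that $v_a$ is $C^1$ (resp.\ $C^2$) there. So the proposition is entirely about comparing the one-sided derivatives at $a$, namely $v_a'(a+)=1$ and $v_a''(a+)=0$, against $\lim_{x\uparrow a}v_a'(x)$ and $\lim_{x\uparrow a}v_a''(x)$. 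A routine mean-value-theorem argument (using the continuity of $v_a$, $Z^{(q)}$ and $W^{(q)}$) upgrades ``these one-sided limits coincide'' to ``$v_a$ is genuinely continuously differentiable of the stated order at $a$'', so it suffices to compare limits.

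For the $C^1$ statement, I would let $x\uparrow a$ in \eqref{u_derivatives1}; since $Z^{(q)}(0)=1$ this gives $\lim_{x\uparrow a}v_a'(x)=1-qW^{(q)}(0)\Lambda(a)$. If $X$ is of bounded variation then $W^{(q)}(0)=1/\mathfrak{d}>0$ by \eqref{eq:Wq0}, so this equals $v_a'(a+)=1$ if and only if $\Lambda(a)=0$; by \eqref{solution_Lambda} and the strict monotonicity of $\overline{Z}^{(q)}$ this holds precisely for $a=a^*$ (recall $a^*>0$ is assumed, so $\mu>0$ and $a^*=(\overline{Z}^{(q)})^{-1}(\mu/q)$). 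This proves (i). If $X$ is of unbounded variation then $W^{(q)}(0)=0$, so $\lim_{x\uparrow a}v_a'(x)=1$ for every $a>0$, and hence $v_a$ is $C^1$ on $(0,\infty)$ for all $a>0$; this is the first half of (ii).

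For the second half of (ii), assume $X$ is of unbounded variation and let $x\uparrow a$ in \eqref{u_derivatives2}; the term $-qW^{(q)}(a-x)$ vanishes in the limit because $W^{(q)}(0)=0$. If $a=a^*$, then $\Lambda(a^*)=0$, so in fact $v_{a^*}''(x)=-qW^{(q)}(a^*-x)$ on $(0,a^*)$ --- equivalently $v_{a^*}'(x)=Z^{(q)}(a^*-x)$ there --- which tends to $0=v_{a^*}''(a^*+)$ as $x\uparrow a^*$; hence $v_{a^*}$ is $C^2$ on $(0,\infty)$. Conversely, if $\Lambda(a)\neq 0$ then $\lim_{x\uparrow a}v_a''(x)=qW^{(q)'}(0+)\,\Lambda(a)$, which by \eqref{eq:Wqp0} equals $(2q/\sigma^2)\Lambda(a)\neq 0$ when $\sigma>0$, and equals $+\infty$ or $-\infty$ (according to the sign of $\Lambda(a)$) when $\sigma=0$ and $\nu(0,\infty)=\infty$; the compound-Poisson alternative in \eqref{eq:Wqp0} cannot occur here since that case is of bounded variation. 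In neither subcase does the limit equal $v_a''(a+)=0$, so $v_a$ is not $C^2$ on $(0,\infty)$. Since, exactly as in the bounded-variation analysis, $\Lambda(a)=0$ if and only if $a=a^*$, this proves (ii).

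The main obstacle is the borderline regime $\sigma=0$, $\nu(0,\infty)=\infty$, in which $W^{(q)'}(0+)=\infty$: one must be careful that the product $W^{(q)'}(a-x)\,\Lambda(a)$ in \eqref{u_derivatives2} is not a genuine $\infty\cdot 0$ indeterminacy --- the factor $\Lambda(a)$ does not depend on $x$, so the limit is $0$ exactly when $\Lambda(a)$ is literally zero and is $\pm\infty$ otherwise --- and consequently that for $a\neq a^*$ the second derivative $v_a''$ is unbounded as $x\uparrow a$ and hence cannot extend continuously to $a$. The remaining bookkeeping --- the elementary fact that matching one-sided derivatives forces true differentiability, and the identification $\Lambda(a)=0\iff a=a^*$ via \eqref{solution_Lambda} and \eqref{eq:opt-threshold} --- is straightforward.
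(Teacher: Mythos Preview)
Your proof is correct and follows essentially the same approach as the paper: reduce the question to matching one-sided derivatives at $x=a$, compute the left limits from \eqref{u_derivatives1}--\eqref{u_derivatives2}, and use \eqref{eq:Wq0}--\eqref{eq:Wqp0} together with the characterisation $\Lambda(a)=0\iff a=a^*$. Your treatment is in fact slightly more careful than the paper's, in that you explicitly separate the unbounded-variation subcases $\sigma>0$ versus $\sigma=0,\ \nu(0,\infty)=\infty$ when arguing $v_a''(a-)\neq 0$ for $a\neq a^*$, whereas the paper simply appeals to \eqref{eq:Wqp0} without distinguishing the case $W^{(q)\prime}(0+)=\infty$.
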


\begin{proof}

\textbf{(i)} Because $Z^{(q)}$ and $W^{(q)}$ are continuous on $\R$ and $\R \backslash \{0\}$, respectively, it is clear in view of \eqref{u_derivatives1} that the differentiability holds anywhere on $(0,\infty) \backslash \{a\}$.  In order to show for $x=a$, letting  $x \uparrow a$ in \eqref{u_derivatives1}, 
\[
v_a'(a-) = 1 - q W^{(q)}(0) \Lambda(a).
\]
Since, when $X$ is of bounded variation $W^{(q)}(0)  \neq 0$ (see \eqref{eq:Wq0}), $v_a'(a-)=1$ only when $\Lambda(a)=0$, which happens only when $a=a^*$.

\textbf{(ii)} When $X$ is of unbounded variation $W^{(q)}(0)=0$, therefore $v_a'(a-)=1$ for all $a>0$.  The differentiability on $(0,\infty) \backslash\{a\}$ is clear similarly to \textbf{(i)}.

Regarding the twice differentiability, because $W^{(q)}$ and $W^{(q)'}$ are continuous on $\R$ and $\R \backslash \{0\}$, respectively, it is clear in view of \eqref{u_derivatives2} that the twice differentiability holds anywhere on $(0,\infty) \backslash \{a\}$.    On the other hand,
\begin{equation}\label{eq:secder}
v_{a^*}''(x)=-q W^{(q)}(a^*-x),
\end{equation}
from which it follows that $v_{a^*}''(a^*-)=0$ since $W^{(q)}(0)=0$. For any other choice of $a$, $v''_a(a-) \neq 0$, which follows from \eqref{eq:Wqp0} and \eqref{u_derivatives2}.
\end{proof}

We shall show below that $a^*$, as determined in \eqref{eq:opt-threshold}, is indeed the optimal barrier level and  \eqref{value_original} with $a=a^*$, which can be written as
\begin{equation}\label{eq:va-star}
v_{a^*}(x) =
\begin{cases}
 - \overline{Z}^{(q)} (a^*-x) - \frac {\psi'(0+)} q = - \overline{Z}^{(q)} (a^*-x) + \frac \mu  q, & \text{if $\mu>0$}, 
 \\ x, & \text{if $\mu \leq 0$},
 \end{cases}  
 \end{equation}
for any $x \geq 0$,  is the value function of the dividend payment problem.

\subsection{Verification}
By Remark \ref{remark_smoothness_zero} (1) and Lemma \ref{lemma_fit}, $v_{a^*}$ defined in \eqref{eq:va-star} is $C^2(0,\infty)$ (resp.\ $C^1(0,\infty)$) when $X$ is of unbounded (resp.\ bounded) variation.  Moreover, it is clear that $v_{a^*}(0) = 0$ in both cases. Therefore, we can use Proposition 4 of  \cite{Avram_et_al_2007}, which is a generic verification theorem for the dividend payment problems of any \lev process. (Also see Lemma 3.1 of \cite{MR2372996}.) From this theorem it follows that to prove the optimality of $v_{a^*}$ it is sufficient to demonstrate the following variational inequality:
\begin{align}
\max \left\{ (\mathcal{L}-q) v_{a^*}(x), 1 - v_{a^*}'(x) \right\} = 0, \quad x > 0. \label{veriational}
\end{align}
Here $\mathcal{L}$ is the infinitesimal generator associated with
the process $X$ applied to a sufficiently smooth function $f$
\begin{align*}
\mathcal{L} f(x) &:= -c f'(x) + \frac 1 2 \sigma^2 f''(x) + \int_0^\infty \left[ f(x+z) - f(x) -  f'(x) z 1_{\{0 < z < 1\}} \right] \nu(\diff z).
\end{align*}
We show that $v_{a^*}$ indeed satisfies \eqref{veriational} and its optimality over all admissible strategies $\Pi$.

\begin{theorem} \label{theorem_dividends}We have $v = v_{a^*}$ as defined in \eqref{eq:va-star} and $\pi_{a^*}$ is the optimal strategy over $\Pi$.
\end{theorem}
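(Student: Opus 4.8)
The plan is to invoke the generic verification theorem, Proposition~4 of \cite{Avram_et_al_2007} (see also Lemma~3.1 of \cite{MR2372996}). Since $v_{a^*}$ of \eqref{eq:va-star} is continuous with $v_{a^*}(0)=0$ and, by Remark~\ref{remark_smoothness_zero} and Proposition~\ref{lemma_fit}, lies in $C^1(0,\infty)$ when $X$ has bounded variation and in $C^2(0,\infty)$ when $X$ has unbounded variation, that theorem reduces the whole claim, $v=v_{a^*}$ with $\pi_{a^*}$ optimal over $\Pi$, to the variational inequality \eqref{veriational}. I would check \eqref{veriational} on $\{x\ge a^*\}$ and on $\{0<x<a^*\}$ in turn, the latter region being vacuous when $\mu\le 0$ (in which case $a^*=0$).

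On $\{x\ge a^*\}$ the slope condition is immediate: $v_{a^*}(x)=x-a^*+\mu/q$ by \eqref{eq:va-star}, so $v_{a^*}'(x)=1$ and $1-v_{a^*}'(x)=0$. For the generator term, observe that because all jumps of $X$ are positive, $\mathcal{L}f(x)$ depends on $f$ only through its first two derivatives at $x$ and its restriction to $[x,\infty)$; since $v_{a^*}$ agrees on $[a^*,\infty)$ with a globally affine map of unit slope, linearity of $\mathcal{L}$ gives $\mathcal{L}v_{a^*}(x)=-\psi'(0+)=\mu$ for every $x\ge a^*$ (finite because $\mu<\infty$; see \eqref{laplace_spectrally_positive}--\eqref{drift}). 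Hence $(\mathcal{L}-q)v_{a^*}(x)=\mu-q(x-a^*+\mu/q)=q(a^*-x)\le 0$, with equality at $x=a^*$; and when $\mu\le 0$ the same computation applied to $v_{a^*}(x)=x$ gives $(\mathcal{L}-q)v_{a^*}(x)=\mu-qx<0$ for $x>0$. In either case \eqref{veriational} holds on $\{x\ge a^*\}$.

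On $\{0<x<a^*\}$ (so $\mu>0$), the slope condition comes out of the choice of $a^*$: evaluating \eqref{u_derivatives1} at $a=a^*$ and using that $a^*$ was defined so that $\Lambda(a^*)=0$ (see \eqref{def_lambda}--\eqref{eq:opt-threshold}) gives $v_{a^*}'(x)=Z^{(q)}(a^*-x)\ge 1$, since $Z^{(q)}\ge 1$ on $\R_+$; thus $1-v_{a^*}'(x)\le 0$. The remaining identity $(\mathcal{L}-q)v_{a^*}(x)=0$ for $0<x<a^*$ is, I expect, the only genuinely delicate step, since here the nonlocal part of $\mathcal{L}$ already reaches into the affine region $[a^*,\infty)$. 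One transparent route uses the closed form $v_{a^*}=-\overline{Z}^{(q)}(a^*-\cdot)+\mu/q$, which by \eqref{eq:va-star} and \eqref{Z_negative} holds on all of $\R_+$: since $-X$ is a spectrally negative \lev process with $q$-scale functions $W^{(q)},Z^{(q)},\overline{Z}^{(q)}$ and generator $\Gamma$ for which $(\Gamma-q)W^{(q)}=(\Gamma-q)Z^{(q)}=0$ on $(0,\infty)$, one has the scale-function identity $(\Gamma-q)\overline{Z}^{(q)}(y)=\psi'(0+)$ for $y>0$, and a change of the spatial variable $y=a^*-x$ together with the duality between $X$ and $-X$ turns this into $(\mathcal{L}-q)v_{a^*}(x)=-[(\Gamma-q)\overline{Z}^{(q)}](a^*-x)-\mu$, which is $-\psi'(0+)-\mu=0$ for $x<a^*$ (and again reproduces $q(a^*-x)$ for $x>a^*$). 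Alternatively, one identifies $v_{a^*}$ on $[0,a^*]$ with the NPV \eqref{def_v_a} of $\pi_{a^*}$ and notes that, before the (upward) first-passage time $\inf\{t:X_t>a^*\}$ and before ruin, no dividend accrues and the controlled process equals $X$; the strong Markov property then makes the discounted process a martingale, and It\^{o}'s formula (or, in the bounded-variation case, the change-of-variables formula for finite-variation semimartingales, which needs only $C^1$) forces $(\mathcal{L}-q)v_{a^*}=0$ on $(0,a^*)$. The main obstacle is precisely to make one of these arguments rigorous: justifying $(\Gamma-q)\overline{Z}^{(q)}=\psi'(0+)$ (and the implicit exchange of $\Gamma$ with differentiation), controlling the integral in $\mathcal{L}v_{a^*}$ near the origin --- handled by the $C^1$/$C^2$ regularity recorded in Remark~\ref{remark_smoothness_zero} --- and accommodating the reduced smoothness in the bounded-variation case. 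Once $(\mathcal{L}-q)v_{a^*}=0$ on $(0,a^*)$ is established, \eqref{veriational} holds throughout $(0,\infty)$ and the verification theorem yields $v=v_{a^*}$ with $\pi_{a^*}$ optimal.
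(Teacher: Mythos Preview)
Your proposal is correct and follows the same verification-via-variational-inequality route as the paper, including your alternative (b) for establishing $(\mathcal{L}-q)v_{a^*}=0$ on $(0,a^*)$, which is exactly what the paper does by invoking the martingale characterization of $g=\overline{Z}^{(q)}+\psi'(0+)/q$ from Proposition~2 of \cite{Avram_et_al_2007} together with It\^{o}'s formula. Your treatment is slightly more explicit in two places: on $[a^*,\infty)$ you compute $\mathcal{L}v_{a^*}(x)=\mu$ directly, obtaining $(\mathcal{L}-q)v_{a^*}(x)=q(a^*-x)$, whereas the paper only notes that $\mathcal{L}v_{a^*}$ is constant there and imports the value at $a^*$ by continuity from the interior; and for the slope condition on $(0,a^*)$ you read off $v_{a^*}'(x)=Z^{(q)}(a^*-x)\ge 1$ from \eqref{u_derivatives1} with $\Lambda(a^*)=0$, whereas the paper argues via concavity ($v_{a^*}''=-qW^{(q)}(a^*-\cdot)<0$) together with $v_{a^*}'(a^*)=1$.
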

\begin{proof}
We will verify that $v_{a^*}$ satisfies \eqref{veriational} in four steps: \hfill \\
\textbf{Step 1.} Suppose $a^* > 0$.   By  Lemma \ref{lemma_fit}, it is clear that $v_{a^*}'(a^*)=1$.   Moreover, by \eqref{eq:secder}, we have that
$v_{a^*}''(x)< 0$, for $x \in (0,a^*)$. Hence $v_{a^*}'(x)$ is decreasing on $(0,a^*)$.  This shows, for $0 < x < a^*$, we have $1 - v'_{a^*}(x) \leq 0$.

\noindent \textbf{Step 2.} Again suppose $a^* > 0$.  Because of our assumption that $\psi'(0+) > -\infty$, Proposition 2 of \cite{Avram_et_al_2007} implies that, with $\widetilde{X} := -X$ and $g(x) := \overline{Z}^{(q)} (x) + \psi'(0+)/q$,  the process $\{ e^{-q (t \wedge T_{(0,a^*)})} g(\widetilde{X}_{t \wedge T_{(0,a^*)}}); t \geq 0 \}$ for $T_{(0,a^*)} := \inf \{ t > 0: \widetilde{X}_t \notin (0,a^*) \}$ is a martingale.  Now, thanks to the smoothness of $\overline{Z}^{(q)}$ as in Remark  \ref{remark_smoothness_zero} (1), It\^{o}'s lemma applies.  In particular, following the same line of arguments presented in Section 4 of \cite{Biffis_Kyprianou_2010}, this implies that $\int_0^{t \wedge T_{(0,a^*)}} e^{-qs} (\mathcal{L}-q) g(\widetilde{X}_s) \diff s = 0$, $t \geq 0$ a.s.  Hence we must have $(\mathcal{L}-q)  g(x)  = 0$ for $0 < x < a^*$. In view of \eqref{eq:va-star}, we have $(\mathcal{L}-q) v_{a^*}(x) = 0$ for all $0 < x < a^*$.  

\noindent \textbf{Step 3.} For $x \geq a^*$, by \eqref{value_original}, we have $1 - v'_{a^*}(x) = 0$.

\noindent \textbf{Step 4.} Suppose $a^* > 0$. Thanks to the smoothness of $v_{a^*}$ at $x=a^*$, which we proved in Proposition \ref{lemma_fit}, Step 2 implies that $(\mathcal{L}-q) v_{a^*}(a^*)=0$. Due to the form of $v_{a^*}$ on $x  \geq a^*$ as in \eqref{value_original}, $\mathcal{L} v_{a^*}(x) $ is a constant.
On the other hand, $q v_{a^*}(x)$ is increasing in $x$.  Hence $(\mathcal{L}-q) v_{a^*}(x) $ is decreasing on $[a^*,\infty)$ and it follows that $(\mathcal{L}-q) v_{a^*}(x) \leq 0$ for $x \geq a^*$.

Now suppose $a^* = 0$ (thus $\mu \leq 0$).  Then $f(x) := v_{a^*}(x) = x$ and  $(\mathcal{L}-q) v_{a^*}(x) = (\mathcal{L}-q) f(x) = \mathcal{L} f(x) - q x$, which is bounded from above by $0$ because $\mathcal{L} f(x) = \mu \leq 0$ for any $x  \geq  0$.
\end{proof}

\section{Solution of the dividend problem with capital injection}\label{sec:injection}

For the capital injection problem as defined in Section \ref{section_injection}, we consider the doubly reflected \lev process with upper barrier $b > 0$ and lower barrier $0$ of the form
\begin{align*}
V_t^b := X_t - L_t^b + R_t^0, \quad t \geq 0.
\end{align*}
 As shown by \cite{Pistorius_2003},  this is a Markov process taking values only on $[0,b]$. By modifying Theorem 1 of \cite{Avram_et_al_2007}, for any $b > 0$ and $0 \leq x \leq b$, we obtain that
\begin{align*}
\mathbb{E}_x \left[ \int_0^\infty e^{-q t} \diff L_t^b \right] &= - \overline{Z}^{(q)}(b-x) - \frac {\psi'(0+)} q + \frac {Z^{(q)}(b)} {q W^{(q)}b)} Z^{(q)} (b-x), \\
\mathbb{E}_x \left[ \int_0^\infty e^{-q t} \diff R_t^0 \right] &= \frac {Z^{(q)}(b-x)} {q W^{(q)}(b)}.
\end{align*}
Hence the expected payoff corresponding to the strategy $\bar{\pi}_b := \left\{ L^b_t, R^0_t; t \geq 0\right\} \in \bar{\Pi}$ is
\begin{align}
\bar{v}_b(x) :=  - \overline{Z}^{(q)}(b-x) - \frac {\psi'(0+)} q + \frac {Z^{(q)}(b) - \varphi} {q W^{(q)}(b)} Z^{(q)} (b-x), \quad 0 \leq x \leq b. \label{v_bar_a}
\end{align}
Similarly to our observations in Remark  \ref{remark_consistency}, using \eqref{Z_negative}, \eqref{v_bar_a} holds even when $x>b$.  Finally, we extend it to the negative line so that
\begin{align}
\bar{v}_b(x)  = \varphi x + \bar{v}_b(0), \quad x < 0. \label{bar_v_negative}
\end{align}

\begin{remark}
Since $Z^{(q)}(b) / W^{(q)}(b) \sim q/\Phi(q)$ as $b \uparrow \infty$ (see, e.g.,\ Exercise 8.5 of \cite{Kyprianou_2006}), it follows that
\begin{align*}
\bar{v}_b(b) =   - \frac {\psi'(0+)} q + \frac {Z^{(q)}(b) - \varphi} {q W^{(q)}(b)} \xrightarrow{b \uparrow \infty} - \frac {\psi'(0+)} q + \frac 1 {\Phi(q)} = \E [S_{\eta(q)}].
\end{align*}
This result complements the observation in Remark \ref{remark_asymptotics}; as $b$ increases  to $\infty$ the impact of ruin vanishes.
\end{remark}

\subsection{Ansatz and verification}  Analogously to the previous section, we choose our candidate barrier level using the $C^1$ ($C^2$) condition at the barrier.  For $x \neq b$, by taking derivatives
\begin{align} \label{v_injection_fit}
\begin{split}
\bar{v}_b'(x) &=  Z^{(q)}(b-x) - \frac {W^{(q)} (b-x)} {W^{(q)} (b)} ( Z^{(q)}(b) - \varphi ), \\
\bar{v}_b''(x) &=  -q W^{(q)}(b-x) + \frac {W^{(q)'} (b-x)} {W^{(q)} (b)} (Z^{(q)}(b) - \varphi).
\end{split}
\end{align}
Hence it is clear that the $C^1$ (resp.\ $C^2$) condition at $x=b$ for the bounded (resp.\ unbounded) variation case holds if and only if $Z^{(q)} (b) = \varphi$. Since $Z^{(q)}$ is strictly increasing on $(0,\infty)$, $Z^{(q)}(0)=1$ and $\lim_{x \to \infty}Z^{(q)}(x)=\infty$ (see e.g. Lemma 3.3 in \cite{Kuznetsov2013}), there exists a unique 
\begin{equation}\label{eq:optbar-2nd}
b^* :=  (Z^{(q)})^{-1}(\varphi)  > 0 \text{ whenever $\varphi > 1$}.    
\end{equation}
The candidate value function simplifies to 
\begin{equation} 
\bar{v}_{b^*}(x) :=  - \overline{Z}^{(q)}(b^*-x) - \frac {\psi'(0+)} q = - \overline{Z}^{(q)}(b^*-x) + \frac \mu  q.\label{v_bar_a_bailout}
\end{equation}
\begin{remark}
As $\varphi \downarrow 1$,  $b^* \downarrow 0$.  This is consistent with the observation given in page 158 of \cite{Avram_et_al_2007}.  On the other hand, $b^* \uparrow \infty$ as $\varphi \uparrow \infty$; as $\varphi$ increases, it gets more risky to pay dividends.
\end{remark}

Thanks to Remark \ref{remark_smoothness_zero} (1) and the way $b^*$ is chosen to ensure the smoothness at $b^*$, we can apply Proposition 4 (2) of  \cite{Avram_et_al_2007}, which tells us that it is sufficient to show that  $\bar{v}_{b^*}$ satisfies the following variational inequality:
\begin{align}
\max \left\{ (\mathcal{L}-q) \bar{v}_{b^*}(x), 1 - \bar{v}_{b^*}'(x) \right\} = 0, \quad x > 0,  \label{veriational_injection1} \\
\bar{v}_{b^*}'(x) \leq \varphi, \quad x > 0,  \label{veriational_injection2} \\
\bar{v}_{b^*}'(x) = \varphi, \quad x < 0.  \label{veriational_injection3}
\end{align}
The steps of proving the verification are similar to the ones in Theorem \ref{theorem_dividends}.  Therefore we will only verify \eqref{veriational_injection2} and \eqref{veriational_injection3}. For $0 < x < b^*$, by \eqref{v_injection_fit}, the monotonicity of $Z^{(q)}$ and \eqref{v_bar_a_bailout} imply, $\bar{v}_{b^*}'(x) =  Z^{(q)}(b^*-x)  \in [1, \varphi]$.
For $x \geq b^*$, it is clear that $\bar{v}_{b^*}'(x) =  1 < \varphi$. Also, \eqref{veriational_injection3} is satisfied  by \eqref{bar_v_negative}.
In summary, we have the following.
\begin{theorem} \label{main_results_injection}We have $\bar{v} = \bar{v}_{b^*}$ as defined in \eqref{v_bar_a_bailout} and $\bar{\pi}_{b^*} := \{ L^{b^*}_t, R^0_t; t \geq 0 \}$ is the optimal strategy over $\bar{\Pi}$.
\end{theorem}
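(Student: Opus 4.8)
The plan is to mirror the proof of Theorem~\ref{theorem_dividends} exactly, exploiting the fact that the candidate value function $\bar{v}_{b^*}$ in \eqref{v_bar_a_bailout} has precisely the same analytic form, $-\overline{Z}^{(q)}(b^*-x) + \mu/q$, as $v_{a^*}$ in \eqref{eq:va-star} (only the threshold differs, being governed by $Z^{(q)}$ rather than $\overline{Z}^{(q)}$). By Remark~\ref{remark_smoothness_zero}(1) and the choice $Z^{(q)}(b^*)=\varphi$, which kills the coefficient of $Z^{(q)}(b^*-x)$ in \eqref{v_injection_fit}, the function $\bar{v}_{b^*}$ is $C^2(0,\infty)$ when $X$ is of unbounded variation and $C^1(0,\infty)$ when $X$ is of bounded variation, with $\bar{v}_{b^*}(0)=-\overline{Z}^{(q)}(b^*) + \mu/q$; one also checks from \eqref{v_bar_a} that $\bar{v}_{b^*}$ agrees with its extensions \eqref{bar_v_negative} for $x<0$. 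With this smoothness in hand, Proposition~4(2) of \cite{Avram_et_al_2007} reduces everything to verifying the three inequalities \eqref{veriational_injection1}, \eqref{veriational_injection2}, \eqref{veriational_injection3}.

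First I would dispatch the two new inequalities, \eqref{veriational_injection2} and \eqref{veriational_injection3}, since these are the only genuinely new content beyond Theorem~\ref{theorem_dividends}: for $0<x<b^*$ the first line of \eqref{v_injection_fit} with $Z^{(q)}(b^*)=\varphi$ collapses to $\bar{v}_{b^*}'(x)=Z^{(q)}(b^*-x)$, which, by strict monotonicity of $Z^{(q)}$ and the boundary values $Z^{(q)}(0)=1$, $Z^{(q)}(b^*)=\varphi$, lies in $[1,\varphi]$; for $x\geq b^*$, \eqref{v_bar_a} (extended past $b^*$) gives $\bar{v}_{b^*}'(x)=1<\varphi$; and for $x<0$, \eqref{bar_v_negative} gives $\bar{v}_{b^*}'(x)=\varphi$ directly. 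This is essentially what the excerpt already sketches.

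Next I would establish \eqref{veriational_injection1}. The bound $1-\bar{v}_{b^*}'(x)\leq 0$ on $(0,b^*)$ follows from $\bar{v}_{b^*}'(x)=Z^{(q)}(b^*-x)\geq 1$; on $[b^*,\infty)$ equality $1-\bar{v}_{b^*}'(x)=0$ holds by \eqref{v_bar_a}. For $(\mathcal{L}-q)\bar{v}_{b^*}\leq 0$ I would follow Steps~2 and~4 of Theorem~\ref{theorem_dividends} verbatim: on $(0,b^*)$, apply Proposition~2 of \cite{Avram_et_al_2007} to $g(x):=\overline{Z}^{(q)}(x)+\psi'(0+)/q$ and $\widetilde X=-X$ to get the martingale $\{e^{-q(t\wedge T_{(0,b^*)})}g(\widetilde X_{t\wedge T_{(0,b^*)}})\}$, then It\^o's lemma (legitimate by the $C^2$ smoothness) forces $(\mathcal{L}-q)g=0$ on $(0,b^*)$, hence $(\mathcal{L}-q)\bar{v}_{b^*}=0$ there; on $[b^*,\infty)$, the smooth fit at $b^*$ gives $(\mathcal{L}-q)\bar{v}_{b^*}(b^*)=0$, while for $x>b^*$, $\bar{v}_{b^*}$ is affine so $\mathcal{L}\bar{v}_{b^*}$ is constant and $q\bar{v}_{b^*}(x)$ is increasing, whence $(\mathcal{L}-q)\bar{v}_{b^*}$ is decreasing on $[b^*,\infty)$ and thus $\leq 0$. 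Note that unlike the ruin problem there is no $a^*=0$ degenerate case here, since $b^*>0$ always when $\varphi>1$.

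The main obstacle is essentially bookkeeping rather than mathematics: one must make sure the cited generic verification theorem (Proposition~4(2) of \cite{Avram_et_al_2007}) really does reduce the capital-injection problem to exactly \eqref{veriational_injection1}--\eqref{veriational_injection3} under the stated smoothness and the boundary behavior \eqref{bar_v_negative} at $x<0$, and that $\bar{v}_{b^*}$ indeed meets its hypotheses (in particular integrability/growth conditions implicit there, which hold because $\overline{Z}^{(q)}$ grows at most linearly up to the bounded correction terms). The only subtlety specific to this problem is the lower reflection at $0$: one should confirm that the extension \eqref{bar_v_negative}, with slope exactly $\varphi$, is the correct one dictated by the cost $\varphi$ of capital injection, which is precisely what condition \eqref{veriational_injection3} encodes and what makes $\bar{\pi}_{b^*}=\{L^{b^*}_t,R^0_t\}$ admissible and optimal.
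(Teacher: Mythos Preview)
Your proposal is correct and follows essentially the same approach as the paper: invoke the smoothness obtained from the choice $Z^{(q)}(b^*)=\varphi$, appeal to Proposition~4(2) of \cite{Avram_et_al_2007} to reduce to the variational inequalities \eqref{veriational_injection1}--\eqref{veriational_injection3}, handle \eqref{veriational_injection1} by repeating Steps~1--4 of Theorem~\ref{theorem_dividends} (noting $b^*>0$ always), and verify \eqref{veriational_injection2}--\eqref{veriational_injection3} directly from $\bar{v}_{b^*}'(x)=Z^{(q)}(b^*-x)\in[1,\varphi]$ and \eqref{bar_v_negative}. The paper's own argument is in fact terser than yours---it simply states that the verification of \eqref{veriational_injection1} is ``similar to the ones in Theorem~\ref{theorem_dividends}'' and writes out only \eqref{veriational_injection2}--\eqref{veriational_injection3}---so your more explicit treatment of \eqref{veriational_injection1} is a faithful (and slightly more complete) execution of the same strategy.
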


\section{Numerical Examples} \label{section_numerics}
We have shown that the dividend payment and cash injection problems both admit solutions written in terms of the scale function.  In order to put this in practice, the only task left to do is to compute the scale function.  There are several examples of \lev processes whose scale functions are known explicitly; see \cite{Kyprianou_2006}, \cite{Kyprianou_2008}, \cite{Hubalek_Kyprianou_2009} and \cite{Kuznetsov2013}.  In general, the scale function can be computed efficiently by inverting the Laplace transform \eqref{laplace_transform} (see \cite{Surya_2008} and \cite{Kuznetsov2013}), or alternatively it can be approximated by those of phase-type \lev processes (see \cite{Asmussen_2004} and \cite{Egami_Yamazaki_2010_2}). Here,  we shall use the latter and confirm via numerical examples the results obtained in the previous sections. 

Consider a spectrally positive \lev process of the form
\begin{equation*}
  X_t  - X_0= -\mathfrak{d} t+\sigma B_t + \sum_{n=1}^{N_t} Z_n, \quad 0\le t <\infty, 
\end{equation*}
for some $\mathfrak{d} \in \R$ and $\sigma \geq 0$.  Here $B=\{B_t; t\ge 0\}$ is a standard Brownian motion, $N=\{N_t; t\ge 0\}$ is a Poisson process with arrival rate $\lambda$, and  $Z = \left\{ Z_n; n = 1,2,\ldots \right\}$ is an i.i.d.\ sequence of phase-type-distributed random variables with representation $(m,{\bm \alpha},{\bm T})$; see \cite{Asmussen_2004}.
These processes are assumed mutually independent. Its Laplace exponent \eqref{laplace_spectrally_positive} is then
\begin{align*}
 \psi(s)   = \mathfrak{d} s + \frac 1 2 \sigma^2 s^2 + \lambda \left( {\bm \alpha} (s {\bm I} - {\bm{T}})^{-1} {\bm t} -1 \right),
 \end{align*}
which is analytic for every $s \in \mathbb{C}$ except at the eigenvalues of ${\bm T}$.  Suppose $\{ -\xi_{i,q}; i \in \mathcal{I}_q \}$ is the set of the (complex-valued) roots of the equality $\psi(s) = q$ with negative real parts, and if these are assumed distinct, then
the scale function can be written
\begin{align*} 
W^{(q)}(x) = \sum_{i \in \mathcal{I}_q} C_{i,q} \left[ e^{\Phi(q) x} - e^{-\xi_{i,q}x} \right] \quad \textrm{and} \quad W^{(q)}(x) =  \sum_{i \in \mathcal{I}_q} C_{i,q} \left[ e^{\Phi(q) x} - e^{- \xi_{i,q}x} \right]  +\frac 1 {\mathfrak{d}} e^{\Phi(q) x},
\end{align*}
for the case $\sigma > 0$ and $\sigma = 0$, respectively for some $\{ C_{i,q}; i \in \mathcal{I}_q \}$; see \cite{Egami_Yamazaki_2010_2}.  For the phase-type distribution, we use $m =6$,
\begin{align*}
&{\bm T} = \left[ \begin{array}{rrrrrr}-4.0488  &  0.0000   & 0.0000  &  0.0000 &   0.0000 &   0.0000 \\
    0.1320  & -4.0012 &  0.0000  &  0.0455 &   3.7040  & 0.0044 \\
    0.2367  &  0.8595   &-4.2831  &  0.1897   & 0.2918   & 2.3724 \\
    3.1532   & 0.0000   & 0.0000 &  -4.0229  &  0.0000  &  0.0000 \\
    0.2497  &  0.0000  &  0.0000  &  3.7024  & -4.0124  &  0.0000 \\
    0.0434   & 2.1947  &  0.0938  &  0.1704  &  0.1217 &  -4.9612 \end{array} \right] \quad  \textrm{and} \quad {\bm \alpha} =    \left[ \begin{array}{r}   0.0052 \\
    0.0659 \\
    0.7446 \\
    0.0398 \\
    0.0043 \\
    0.1403  \end{array} \right]. 
\end{align*}  
This approximates  (the absolute values of) the Gaussian distribution with mean zero and standard deviation $1$, obtained using the EM-algorithm; see \cite{Egami_Yamazaki_2010_2} for  the approximation performance of the corresponding scale function.    We also let $q=0.05$ and $\lambda = 3.5$. 

We shall first confirm the results obtained in Theorem \ref{theorem_dividends}.  We consider both the bounded and unbounded variation cases with $\sigma = 0$ and $\sigma=1$, respectively.   In Figure \ref{figure_regular}, we show the value function $v_{a^*}$ as well as the point $(a^*, v_{a^*}(a^*))$ for $\mathfrak{d} =2.0, 2.33, 2.67, 3.0$ or equivalently $\mu =  0.80,0.47, 0.13,-0.20$.  The value function as well as the value of $a^*$ decrease as $\mathfrak{d}$ increases (or $\mu$ decreases); in particular $a^*=0$ for the case $\mathfrak{d} = 3.0$ (or $\mu =  -0.20 \leq 0$).  It is also observed that the value function is smooth at $a^*$ for both bounded and unbounded variation cases.  

\begin{figure}[htbp]
\begin{center}
\begin{minipage}{1.0\textwidth}
\centering
\begin{tabular}{cc}
\includegraphics[scale=0.58]{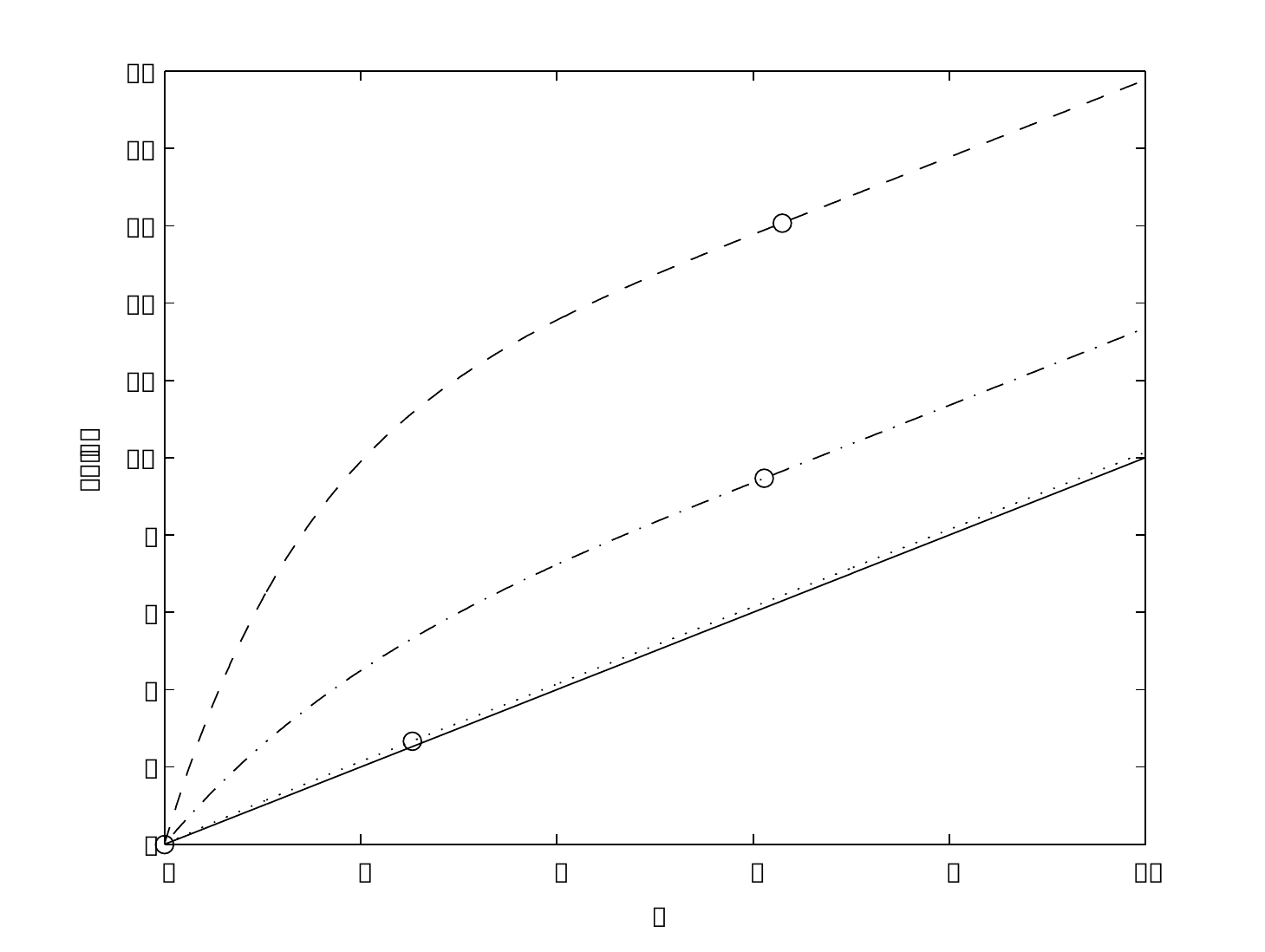}  & \includegraphics[scale=0.58]{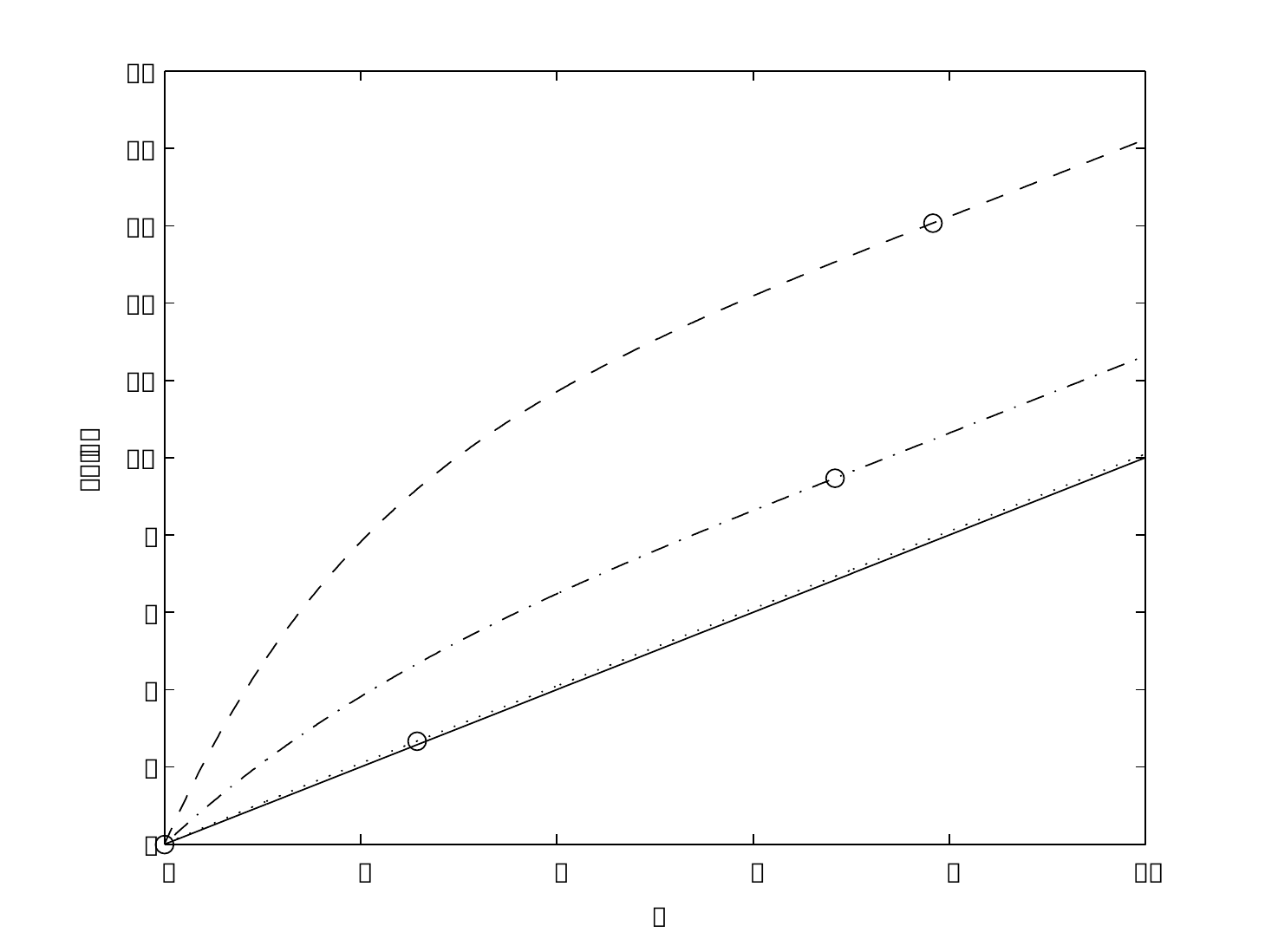} \\
$\sigma = 0$ & $\sigma = 1$ \vspace{0.3cm} \\
\end{tabular}
\end{minipage}
\caption{Results on the dividend payment problem for $\sigma = 0$ (left)  and $\sigma = 1$ (right) with $\mathfrak{d} = 2.0, 2.33, 2.67, 3.0$ (or $\mu =  0.80,0.47, 0.13,-0.20$, respectively) and a common value of $\lambda = 3.5$.} \label{figure_regular}
\end{center}
\end{figure}

Next we give results on the capital injection problem and confirm the results in Theorem  \ref{main_results_injection}.   In Figure \ref{figure_capital_injection}, we plot the value function as well as the point $(b^*, \bar{v}_{b^*}(b^*)) = (b^*, \mu/q)$ for $\sigma = 0,1$ and $\varphi= 1.001, 1.5,2,5$. Here we use the common value of  $\mathfrak{d} = 2.33$ and hence $\bar{v}_{b^*}(b^*)$ is the same for each.   The value function is indeed decreasing in the unit cost $\varphi$ and the value of $b^*$ decreases to zero as $\varphi$ decreases to $1$.  As in the case of dividend payment problem, we can again confirm the smoothness of the value function for all cases.

\begin{figure}[htbp]
\begin{center}
\begin{minipage}{1.0\textwidth}
\centering
\begin{tabular}{cc}
\includegraphics[scale=0.58]{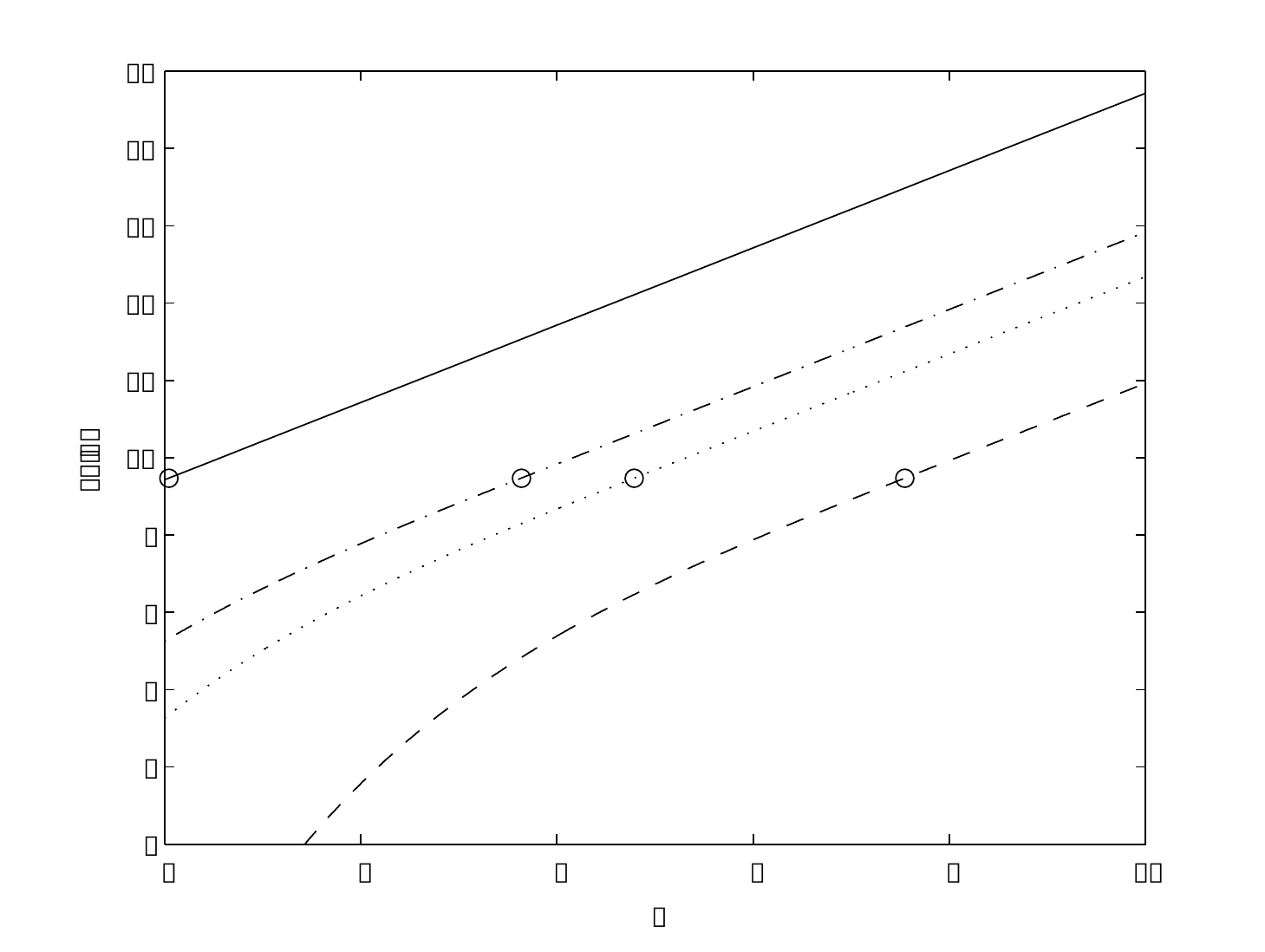}  & \includegraphics[scale=0.58]{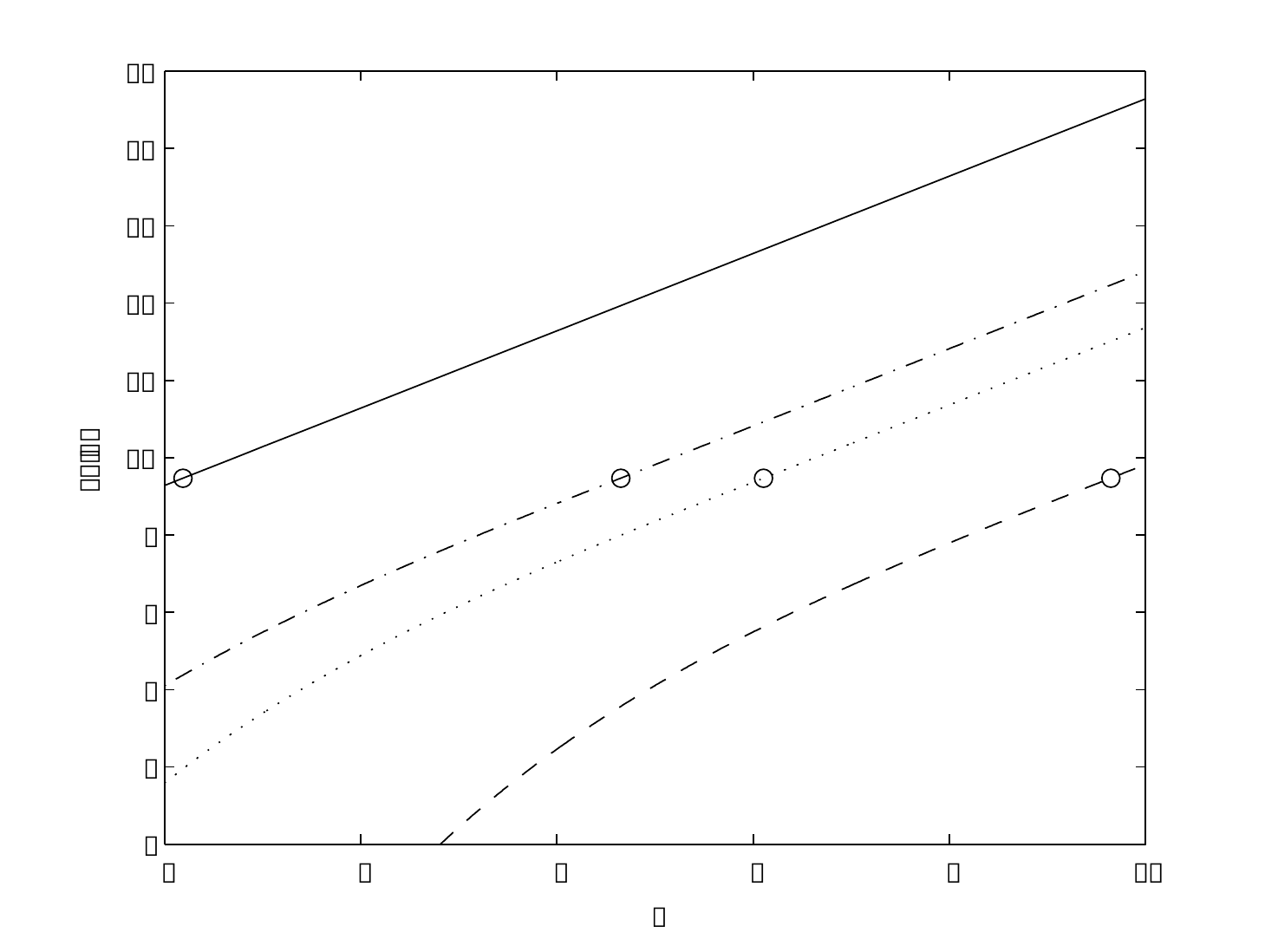} \\
$\sigma = 0$ & $\sigma = 1$ \vspace{0.3cm} \\
\end{tabular}
\end{minipage}
\caption{Results on the capital injection problem for the cases $\sigma = 0$: (left)  and $\sigma = 1$ (right) for  $\varphi= 1.001, 1.5,2,5$ with a common value $\mathfrak{d} = 2.33$ and $\lambda = 3.5$.}  \label{figure_capital_injection}
\end{center}
\end{figure}

\section*{Acknowledgements} We would like to the thank the referees and the editors for their feedback.
E. Bayraktar is supported in part by the National Science Foundation under a Career grant DMS-0955463 and in part by the Susan M. Smith Professorship.
K.\ Yamazaki is in part supported by Grant-in-Aid for Young Scientists
(B) No.\ 22710143, the Ministry of Education, Culture, Sports,
Science and Technology, and by Grant-in-Aid for Scientific Research (B) No.\  2271014, Japan Society for the Promotion of Science. A. Kyprianou would like to thank FIM (Forschungsinstitut f\"ur Mathematik) for supporting him during his sabbatical at ETH, Zurich. 
\bibliographystyle{abbrv}
\bibliographystyle{apalike}

\bibliographystyle{agsm}
\bibliography{dual_model_bib}

\end{document}